\newtheorem{te}{Theorem}[section]
\newtheorem{prop}[te]{Proposition}
\newtheorem{lem}[te]{Lemma}
\newtheorem{coro}[te]{Corollary}
\numberwithin{equation}{section}
\begin{document}

\title[]{The existence and local uniqueness of the eigenfunctions of the non-linear operator $\Delta_H u^{n}$ in the hyperbolic Poincaré half-plane.}	
	\author{F. Maltese}
	\maketitle
	
\begin{abstract}
In this article, we find locally eigenfunctions for a particular nonlinear hyperbolic differential operator $\Delta_H u^{n}$, where $\Delta_H$ is the hyperbolic Laplacian in the Poincairé half-plane.
We have proved that these eigenfunctions are an analytic and non-exact, whose coefficients satisfy a specific algebraic recursive rule.
The existence of these eigenfunctions allows us to find non-exact solutions with respect to the spatial coordinate of nonlinear diffusive PDEs on the Poincaré half-plane, which could describe a possible one-dimensional physical model.

	\bigskip

\textit{Keywords: Eigenfunction, Invariant spaces, Polynomials, Algebraic operator, Recursive formula, Cauchy problem, Nonlinear hyperbolic differential operator. 
}  
\end{abstract}
\maketitle	

\section*{Contents}

\subsection*{0.Introduction .........................................................................................................................1}

\subsection*{1.Transformation of the operator $ \Delta_H u^{n}$ into an algebraic differential operator by change of variables. .................................................................................................................3}

\subsection*{2.Solving (1.6) using formal power series .................................................................................4}

\subsection*{3.The demonstration that there doesn't exist a polynomial function with respect to the variable z as a solution of (1.6) .......................................................................................8}

\subsection*{3.1. Local existence and uniqueness of an analytic solution of (1.6) .........................................10}

\subsection*{3.2 The solution of the equation
	(0.3) ......................................................................................12}

\subsection*{4.Eigenfunction of $ \Delta_H u^{n}$ like a solution of nonlinear hyperbolic partial differential equations ..........................................................................................................................12}

\subsection*{References
	..............................................................................................................................15}

\section*{Introduction}

In this paper, it will be proven the existence and local uniqueness of an analytic eigenfunction of the nonlinear hyperbolic Laplacian operator

\begin{equation}
	\Delta_H u^{n}=\frac{1}{\sinh \eta}\frac{\partial}{\partial \eta}\sinh \eta \frac{\partial  u^{n}}{\partial \eta}  \quad n>1,
\end{equation}

in the hyperbolic half-plane by hyperbolic polar coordinates. In the present case, the focus will be on the $\eta$-coordinate, which indicates the length of the geodesic in the hyperbolic half-plane of Poincaré. For more information this, see [4, Section 2] .

\quad 

The search for these eigenfunctions was motivated by the finding of solutions to the nonlinear diffusive equation on the Poincaré half-plane

\begin{equation}
\widehat{O}_t  u (\eta,t)= \Delta_H  u^{n}= \frac{1}{\sinh \eta}\frac{\partial}{\partial \eta}\sinh \eta \frac{\partial  u^{n}}{\partial \eta}-u, \quad n>1, \hspace{0.2cm} \eta>0.
\end{equation}	

With $	\widehat{O}_t$ a linear time operator. This equation is a variation of an HPME (Hyperbolic porous media equation) $$\widehat{O}_t  u (\eta,t)= \Delta_H  u^{n} $$ with the addition of a reactive term '$-u$' (for HPME, see [11]). Using the methods of separation of variables and invariant spaces (for these methods it is possible to see [3] and [6, Section 6]), in the article [2], it was found exact solutions of the type $u(\eta,t)=f(t)\sqrt[n]{c_1\ln(\tanh \frac{\eta}{2})+c_2}$. Where $u(\eta,t)$ must satisfy the following equations $$\widehat{O}_t f(t)=-f(t),\hspace{0.1cm} \Delta_H\Bigg({\sqrt[n]{c_1\ln(\tanh \frac{\eta}{2})+c_2}\Bigg)}^{n}=0$$ i.e., $\sqrt[n]{c_1\ln(\tanh \frac{\eta}{2})+c_2} \in Ker \Delta_H g^n(\eta)$.
By the same method, we found locally an analytic solution of the type $u(\eta,t)=f(t)g(\eta)$ such that $\Delta_H g^n(\eta)=\lambda g(\eta)$, for some $\lambda \neq 0 \in \mathbb R$ and $\widehat{O}_t f(t)=\lambda f^n(t) -f(t)$ such that $g(\eta)$ is an eigenfunction of $\Delta_H u^n$. Note that, unlike the exact solutions that cancel out the operator $\Delta_H u^{n}$, the eigenfunction $g(\eta)$ is not only the spatial part of the solution to (0.1) but also the spatial part of the HPME solution described above, and what is interesting is that we have found a unique local analytical solution. Indeed, when looking for solutions to an HPME, one usually studies weak solutions, obtains numerical approximations of the solution, or performs a qualitative analysis of the solution. However, one can always refer to [11] for more details on these facts.

In particular, these last solutions will be explicitly calculated also in the temporal part $f(t)$, in section 4, as solutions of variants of (0.2) i.e., the PDE nonlinear (4.4) and (4.6) (for information on the last equation, see [2]).
\quad 
\quad

Therefore, in most of this article, we are going to be focusing on the study of the solutions of the ODE

\begin{equation}
	 \frac{1}{\sinh \eta}\frac{\partial}{\partial \eta}\sinh \eta \frac{\partial  u^{n}(\eta)}{\partial \eta}=\lambda u(\eta).
\end{equation}

And this will be done by several intermediate steps, which will be illustrated in the different sections.
In the first section, by transforming the $\eta$-coordinate (1.3), we have transformed the operator (0.1) into an algebraic operator, i.e., the coefficients of the differential operator are polynomials.

\quad

In the second section we have found a formal solution of the PDE (1.6) modified by the transformation (1.3). That is, we found a formal power series that satisfies (1.6) by recursive algebraic formulas of the coefficients (2.4) and (2.5). Afterwards, we have proved that formula (2.5) depends on the choice of the first two coefficients of the sequence associated with the formal power series .

\quad

In the first part of the third section, we have shown that there is no polynomial solution of (1.6) studying thr leading coefficient of the polynomial, transformed by algebraic operator (1.5) and applying the equation (1.6). In the second part of the third section, we have found locally an analytic solution of (1.6), and, then in the last part of this section, returning to equation (0.3) via (1.3), we have found an eigenfunction of the operator (0.1).
In particular, in this section, the analytic function of (1.6) enjoys a recursive property among its derivatives (3.14), and this fact is interesting because we can obtain approximations of the solution, using an algorithm that implements the recursive formula (3.14) obtained. This recursive nature of the the solution's coefficients is due to the fact that the differential operator, $ \Delta_H u^{n}$ becomes algebraic under transformations, a non-trivial fact.

\quad

\section{Transformation of the operator  $ \Delta_H u^{n}$ into an algebraic differential operator by change of variables.}

A first step in solving (0.3) is to implement a variable change to make the operator (0.1) polynomial. For that, we consider as test functions the compound functions of the type $f(z(\eta))$, where $z=z(\eta)$ is the change of variable. By the chain rule we have that $$\frac{\partial f(z(\eta))}{\partial \eta}=\frac{\partial f(z)}{\partial z}\frac{\partial z(\eta)}{\partial \eta}.$$

\quad

So we have that $$\frac{1}{\frac{\partial z(\eta)}{\partial \eta}}\frac{\partial f(z(\eta))}{\partial \eta}=\frac{\partial f(z)}{\partial z}.$$ If we assume that 

\begin{equation}
\frac{\partial z(\eta)}{\partial \eta}={\delta}^{'}\sinh \eta,
\end{equation}

where ${\delta}^{'}$is a suitable positive real parameter to be determined in order to find the solution to (0.3), we obtain that

$$
	\frac{1}{\sinh \eta}\frac{\partial f(z(\eta))}{\partial \eta}={\delta}^{'}\frac{\partial f(z)}{\partial z},
$$

that is, the following equality between differential operators

\begin{equation}
		\frac{1}{\sinh \eta}\frac{\partial}{\partial \eta}={\delta}^{'}\frac{\partial}{\partial z}.
\end{equation}

Then the transformation $z(\eta)$ must satisfy equation (1.1). In particular, we choose

\begin{equation}
	z(\eta)={\delta}^{'}\cosh \eta.
\end{equation}

\quad

We complete the transformation of the operator (0.1) by modifying the following differential operator $\sinh \eta \frac{\partial  (\bullet)^{n}}{\partial \eta}$, again using the test function $f(z(\eta))$.

$$\sinh \eta \frac{\partial f^{n}(z(\eta)) }{\partial \eta}=\frac{{\sinh \eta}^{2}}{\sinh \eta}\frac{\partial f^{n}(z(\eta)) }{\partial \eta}$$

$$\sinh \eta \frac{\partial f^{n}(z(\eta)) }{\partial \eta}=({\cosh \eta}^{2}-1)	\frac{1}{\sinh \eta}\frac{\partial f^{n}(z(\eta))}{\partial \eta}$$

$$\sinh \eta \frac{\partial f^{n}(z(\eta)) }{\partial \eta}=\frac{1}{{\delta}^{'}}\Big(z^{2}-{{\delta}^{'}}^{2}\Big)\frac{\partial f^{n}(z) }{\partial z}.$$

This gives us the following equality between operators

\begin{equation}
	\sinh \eta \frac{\partial  (\bullet)^{n}}{\partial \eta}=\frac{1}{{\delta}^{'}}\Big(z^{2}-{{\delta}^{'}}^{2}\Big)\frac{\partial(\bullet)^{n} }{\partial z}.
\end{equation}

\quad

If we combine (1.2) with (1.4), we obtain the complete transformation of the operator (0.1) as follows

\begin{equation}
	\frac{1}{\sinh \eta}\frac{\partial}{\partial \eta}\sinh \eta \frac{\partial  (\bullet)^{n}}{\partial \eta}=\frac{\partial }{\partial z}\Big(z^{2}-{{\delta}^{'}}^{2}\Big)\frac{\partial(\bullet)^{n} }{\partial z}.
\end{equation}

\quad

If we want to find as solutions of (0.3) a function like the test function $f(z(\eta))$, i.e., of the type $u(z(\eta))$. Then, via (1.5), equation (0.3) becomes

\begin{equation}
	\frac{\partial }{\partial z}\Big(z^{2}-{{\delta}^{'}}^{2}\Big)\frac{\partial u^{n}(z) }{\partial z}=\lambda u(z).
\end{equation}

\quad

\section{Solving (1.6) using formal power series }

Given a formal power series $S(z)=\sum_{i \geq 0}a_{i}z^{i}$,  associated with an infinite sequence $ a_{i}  \in \mathbb{R}$, we want to check which constraints have the terms of the sequence $a_{i}$ when we substitute $S(z)$ into (1.6). First, we substitute $S(z)$ into (1.5)

$$ \hspace{-6cm}	\frac{\partial }{\partial z}\Big(z^{2}-{{\delta}^{'}}^{2}\Big)\frac{\partial S^{n}(z) }{\partial z} =\frac{\partial }{\partial z}\Big(z^{2}-{{\delta}^{'}}^{2}\Big)\frac{\partial \Big(\sum_{i \geq 0}a_{i}z^{i}  \Big)^{n}  }{\partial z}$$

$$\hspace{-2.5cm}=\frac{\partial }{\partial z}\Big(z^{2}-{{\delta}^{'}}^{2}\Big)\frac{\partial\Big( \sum_{i \geq 0}b_{i}z^{i}\Big)  }{\partial z},$$

where 

$$\Bigg(\sum_{i \geq 0}a_{i}z^{i}  \Bigg)^{n}=\sum_{i \geq 0}b_{i}z^{i}  $$

such that the sequence $b_{i}$ satisfies the following recursive formula
\begin{equation}
b_{i}=\frac{1}{ia_{0}}\sum_{k= 1}^{i}(k(n+1)-i)a_{k}b_{i-k}\hspace{0.2cm}with\hspace{0.1cm}a_{0}\neq0\hspace{0.1cm} and \hspace{0.1cm} i>0.
\end{equation}

While for $i=0$ we'll have $b_{0}=a_{0}^{n}$.See [12], for the details on how to obtain these recursive formulas.

\quad

Now, we can continue to perform calculations on the formal series raised to a power

$$	\frac{\partial }{\partial z}\Big(z^{2}-{{\delta}^{'}}^{2}\Big)\frac{\partial S^{n}(z) }{\partial z}=\frac{\partial }{\partial z}\Big(z^{2}-{{\delta}^{'}}^{2}\Big)\Bigg(\sum_{i \geq 1}ib_{i}z^{i-1}\Bigg)$$

$$\hspace{4.5cm}=\frac{\partial }{\partial z}\Bigg( \sum_{i \geq 1}ib_{i}z^{i+1}-{{\delta}^{'}}^{2}\sum_{i \geq 1}ib_{i}z^{i-1}\Bigg)$$

$$\hspace{5.2cm}=\sum_{i \geq 1}i(i+1)b_{i}z^{i}-{{\delta}^{'}}^{2}\sum_{i \geq 2}i(i-1)b_{i}z^{i-2}$$

$$\hspace{6.2cm}=\sum_{i \geq 1}i(i+1)b_{i}z^{i}-{{\delta}^{'}}^{2}\sum_{i \geq 0}(i+2)(i+1)b_{i+2}z^{i}$$

\begin{equation}
\hspace{4.5cm}=\sum_{i \geq 0}(i+1)(ib_{i}-{{\delta}^{'}}^{2}(i+2)b_{i+2})z^{i}.
\end{equation}
	
In order for the formal power series $S(z)$ to satisfy equation (1.6), i.e. to be an eigenfunction, it must be verified that

$$
\sum_{i \geq 0}(i+1)(ib_{i}-{{\delta}^{'}}^{2}(i+2)b_{i+2})z^{i}=\lambda\sum_{i \geq 0}a_{i}z^{i}. 
$$

By equating every monomial in both series, we arrive at the following formula:

\begin{equation}
\hspace{2.5cm} (i+1) (ib_{i}-{{\delta}^{'}}^{2}(i+2)b_{i+2})=\lambda a_{i}.        
\end{equation}

Using the recursive formula (2.1), we can partially expand the first member of equation (2.3) in terms of the coefficients $a_{i}$ for $i>0$ .

$$
(i+1)\Big(i\frac{1}{ia_{0}}\sum_{k= 1}^{i}(k(n+1)-i)a_{k}b_{i-k}-{{\delta}^{'}}^{2}(i+2)\frac{1}{(i+2)a_{0}}\sum_{k= 1}^{i+2}(k(n+1)-i-2)a_{k}b_{i+2-k}\Big)=\lambda a_{i}.        
$$

$$\hspace{1.5cm}(i+1)\Big(\frac{1}{a_{0}}\sum_{k= 1}^{i}(k(n+1)-i)a_{k}b_{i-k}-{{\delta}^{'}}^{2}\frac{1}{a_{0}}\sum_{k= 1}^{i+2}(k(n+1)-i-2)a_{k}b_{i+2-k}\Big)=\lambda a_{i}. 
$$

\begin{equation}\frac{i+1}{a_{0}}\Big(\sum_{k= 1}^{i}(k(n+1)-i)a_{k}b_{i-k}-{{\delta}^{'}}^{2}\sum_{k= 1}^{i+2}(k(n+1)-i-2)a_{k}b_{i+2-k}\Big)=\lambda a_{i}. 
\end{equation}

Now, we make explicit the equation in respect with the term $a_{i+2}$ to rewrite (2.4)
in recursive way.

$$-{{\delta}^{'}}^{2}(i+1)(i+2)na_{i+2}a_{0}^{n-1}+ \frac{i+1}{a_{0}}\Big(\sum_{k= 1}^{i}(k(n+1)-i)a_{k}b_{i-k}-{{\delta}^{'}}^{2}\sum_{k= 1}^{i+1}(k(n+1)-i-2)a_{k}b_{i+2-k}\Big)=\lambda a_{i}.$$

\begin{equation}	
a_{i+2}=-\frac{\lambda a_{i}}{{{\delta}^{'}}^{2}(i+1)(i+2)na_{0}^{n-1}} + \frac{1}{{{\delta}^{'}}^{2}(i+2)na_{0}^{n}}\Big(\sum_{k= 1}^{i}(k(n+1)-i)a_{k}b_{i-k}-{{\delta}^{'}}^{2}\sum_{k= 1}^{i+1}(k(n+1)-i-2)a_{k}b_{i+2-k}\Big).
\end{equation}

\quad

\quad

\quad

	Examining formula (2.5), one can argue that it can be applied recursively to each term of the second member of (2.5)$a_k$, for $k\ge 2$. Thus, (2.5) depends only on the first two terms of the sequence  $a_k$, $a_0$ and $a_1$. Let us try to see how. First, however, we must prove the following Lemma.

\begin{lem}
		The sequences $b_{j}$ of (2.1)  can be viewed as rational algebraic functions of the $a_{i}$ terms, where $0\leq i \leq j$ i.e., 
	
	$$b_{j}=b_{j}(a_{0},...,a_{l},...,a_{j}), \hspace{0.1cm} b_{j} \in \mathbb{R}(a_{0},...,a_{l},...,a_{j}) $$ 
	 
	which, to make notation more compact, this can be denoted as follows:
	
	\begin{equation}
b_{j}\Big({(a_{l})}^{j}_{l=0}\Big):=	b_{j}(a_{0},...,a_{l},...,a_{j}).
	\end{equation}
\end{lem}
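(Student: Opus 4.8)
The plan is to argue by strong induction on the index $j$, using the recursion (2.1) together with the initial value $b_0=a_0^n$ as the engine. The statement I would establish at each step is the conjunction of two facts: that $b_j$ lies in the field $\mathbb{R}(a_0,\dots,a_j)$ of rational functions in the indicated variables, and that no coefficient $a_m$ with $m>j$ occurs in its expression. Bundling both claims together is what makes the induction close cleanly, since the rationality is useless for the step unless one also controls which variables are allowed to appear.

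For the base case $j=0$ I would simply note that $b_0=a_0^n$, which for an integer $n>1$ is a polynomial, hence an element of $\mathbb{R}(a_0)$, and manifestly involves no $a_m$ with $m>0$. (If $n$ is not a positive integer, one reads the claim in the field obtained by adjoining the symbol $a_0^n$; the rest of the argument is unaffected, and the standing hypothesis $a_0\neq0$ is what keeps this initial value well defined.)

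For the inductive step, assume $b_m\in\mathbb{R}(a_0,\dots,a_m)$ for every $0\le m<j$. I would then read off from (2.1) that
$$b_j=\frac{1}{j a_0}\sum_{k=1}^{j}\bigl(k(n+1)-j\bigr)\,a_k\,b_{j-k}.$$
The key observation is a bookkeeping one about the index ranges: the summation variable runs over $1\le k\le j$, so the only coefficients $a_k$ that appear satisfy $k\le j$, while the factors $b_{j-k}$ carry indices $0\le j-k\le j-1$, all strictly below $j$. Hence every $b_{j-k}$ lies, by the inductive hypothesis, in $\mathbb{R}(a_0,\dots,a_{j-k})\subseteq\mathbb{R}(a_0,\dots,a_j)$; the numerical factors $k(n+1)-j$ are real constants; and the prefactor $1/(j a_0)$ is rational in $a_0$, which is exactly where $a_0\neq0$ is used again. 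Since $\mathbb{R}(a_0,\dots,a_j)$ is a field, it is closed under the sums and products on the right-hand side, so $b_j\in\mathbb{R}(a_0,\dots,a_j)$, and no $a_m$ with $m>j$ has been introduced. This establishes both parts of the inductive claim, and the compact notation (2.6) is then just a name for the dependence so proved.

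The argument is essentially routine once it is set up this way, so I do not expect a genuine obstacle; the only point that requires care is the double bookkeeping of the index ranges — verifying simultaneously that rationality is preserved and that the variable list does not grow past $a_j$ — together with the mild subtlety of the $a_0^n$ term in the base case, which forces the assumption $a_0\neq0$ and, for non-integer $n$, a reading of the statement over an appropriate extension field.
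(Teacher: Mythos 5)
Your proposal is correct and follows essentially the same route as the paper: strong induction on $j$ with base case $b_0=a_0^n$ and the inductive step read off directly from the recursion (2.1), noting that only $a_k$ with $k\le j$ and $b_{j-k}$ with $j-k<j$ appear. Your added bookkeeping (field closure of $\mathbb{R}(a_0,\dots,a_j)$, the role of $a_0\neq0$, and the remark on non-integer $n$) only makes explicit what the paper leaves implicit.
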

\begin{proof}
	We prove the lemma using the induction for j.
	
	\quad
	
	Let's begin with the base of the induction i.e., for j=0.
	
	 Since we know that $b_{0}=a_{0}^{n}$, the base case is trivial.
	 
	 Now we consider the inductive hypothesis that is $$	b_{k}=b_{k}\Big({(a_{l})}^{k}_{l=0}\Big) \hspace{0.1cm} for \hspace{0.1cm} 0\leq k \leq j-1.$$
	 
	 Considering the recursive formula (2.1), we obtain
	 
	 $$b_{j}=\frac{1}{ja_{0}}\sum_{k= 1}^{j}(k(n+1)-j)a_{k}b_{j-k}\Big({(a_{l})}^{j-k}_{l=0}\Big)$$
	 
	  $$b_{j}=b_{j}\Big({(a_{l})}^{j}_{l=0}\Big).$$
	
	Thus, we have proven the thesis.
	
\end{proof}

Now we can prove the following proposition.

\begin{prop}
	
Assuming that we know the first two terms of the sequence $a_{0}$ and $a_{1}$, then the formula (2.5) can be rewritten as follows

\begin{equation}
	a_{i+2}=\mathcal{Q}_{i+2,n}(a_{0},a_{1}, \lambda,{\delta}^{'}) \hspace{0.2cm}\forall \hspace{0.1cm} k\geq 0.
\end{equation}
Where $\mathcal{Q}_{0,n}(a_{0},a_{1}, \lambda,{\delta}^{'}):=a_{0}$ , $\mathcal{Q}_{1,n}(a_{0},a_{1}, \lambda,{\delta}^{'}):=a_{1}$ and $\mathcal{Q}_{j,n}(X,Y,Z,W) \in \mathbb{R}(X,Y,Z,W) \hspace{0.4cm} \forall j \geq 0$ i.e., are rational algebraic functions in $\mathbb{R}^{4}$. In particular, we have that $\mathcal{Q}_{0,n}(X,Y,Z,W)=X$ and $\mathcal{Q}_{1,n}(X,Y,Z,W)=Y$.

\end{prop}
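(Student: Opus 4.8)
The plan is to argue by strong (complete) induction on the index, feeding formula (2.5) and Lemma (2.6) into the single structural fact that the field $\mathbb{R}(X,Y,Z,W)$ of rational functions is closed under sums, products, quotients, and composition. The base cases are built into the statement: setting $\mathcal{Q}_{0,n}(X,Y,Z,W)=X$ and $\mathcal{Q}_{1,n}(X,Y,Z,W)=Y$ gives $a_{0}=\mathcal{Q}_{0,n}(a_{0},a_{1},\lambda,{\delta}^{'})$ and $a_{1}=\mathcal{Q}_{1,n}(a_{0},a_{1},\lambda,{\delta}^{'})$ trivially. Because $a_{i+2}$ will be seen to depend on all of $a_{0},\dots,a_{i+1}$ and not merely on the two immediately preceding terms, strong induction (rather than the ordinary two-step form) is the natural framework.

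For the inductive step, I would fix $i\ge 0$ and assume that for every $k$ with $0\le k\le i+1$ one has $a_{k}=\mathcal{Q}_{k,n}(a_{0},a_{1},\lambda,{\delta}^{'})$ for some $\mathcal{Q}_{k,n}\in\mathbb{R}(X,Y,Z,W)$. The first, and decisive, observation is an index count on the right-hand side of (2.5): the coefficients $a_{k}$ appearing there have index at most $i+1$ (the reaction term contributes $a_{i}$, the first sum runs to $k=i$, and the second sum to $k=i+1$), while the auxiliary coefficients $b_{m}$ occurring there have index at most $i+1$ as well, the largest being $b_{i+1}$ produced by the $k=1$ term of the second sum. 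Thus every ingredient of (2.5) carries an index $\le i+1$, and the inductive hypothesis applies to each $a_{k}$ present.

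It then remains to eliminate the $b_{m}$. Invoking Lemma (2.6), each $b_{m}$ with $0\le m\le i+1$ is a rational function $b_{m}\big((a_{l})_{l=0}^{m}\big)$ of $a_{0},\dots,a_{m}$; substituting $a_{l}=\mathcal{Q}_{l,n}(a_{0},a_{1},\lambda,{\delta}^{'})$ and using closure under composition rewrites each such $b_{m}$ as an element of $\mathbb{R}(X,Y,Z,W)$ evaluated at $(a_{0},a_{1},\lambda,{\delta}^{'})$. Hence the whole right-hand side of (2.5) --- a finite arithmetic combination of the $a_{k}$, the $b_{m}$, and the parameters $\lambda,n,{\delta}^{'}$, with denominator ${{\delta}^{'}}^{2}(i+2)na_{0}^{n}\neq 0$ guaranteed by the standing hypothesis $a_{0}\neq 0$ --- is one well-defined rational function of $(a_{0},a_{1},\lambda,{\delta}^{'})$; defining $\mathcal{Q}_{i+2,n}$ to be that function closes the induction.

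I expect the work to be bookkeeping rather than conceptual, and the one point that genuinely must be checked is the index bound $m\le i+1$ on the $b_{m}$: were some $b_{i+2}$ (or higher) to survive on the right-hand side of (2.5), the induction would collapse, since $b_{i+2}$ itself involves $a_{i+2}$. Here one should emphasize that the passage from (2.4) to (2.5) was performed precisely to split off the $a_{i+2}$-bearing piece $-{{\delta}^{'}}^{2}(i+1)(i+2)na_{i+2}a_{0}^{n-1}$ of the $b_{i+2}$ term and solve for $a_{i+2}$, which is exactly what enforces this bound and makes the recursion --- and hence the induction --- legitimate.
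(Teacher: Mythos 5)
Your proposal is correct and follows essentially the same route as the paper: strong induction on $i$, substituting the inductive hypothesis into (2.5) and invoking Lemma 2.1 to express each $b_{m}$ as a rational function of $a_{0},\dots,a_{m}$, hence of $(a_{0},a_{1},\lambda,{\delta}^{'})$ by closure of $\mathbb{R}(X,Y,Z,W)$ under arithmetic and composition. The only differences are organizational --- the paper's base case explicitly computes $a_{2}=\mathcal{Q}_{2,n}$ from (2.3) at $i=0$, whereas you treat $i=0$ inside the general step --- and your explicit verification that every $b_{m}$ on the right-hand side of (2.5) has $m\le i+1$ makes precise the well-foundedness point that the paper leaves implicit.
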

\begin{proof}
 We are going to prove the proposition by induction on the index $i$.

\quad

\title{INDUCTION BASE: i=0} 

\quad

In this case, we use the formula (2.3) for $i=0$ i.e.

$$ -2{{\delta}^{'}}^{2}b_{2}=\lambda a_{0}. $$

Now, we will calculate $a_{2}$ using the above equation. The procedure is the same as that used to derive the general formula (2.3) for $i>0$.

In fact, using the formula (2.1) for $b_{2}$, we obtain

$$-2{{\delta}^{'}}^{2}na_{2}a_{0}^{n-1} -\frac{{{\delta}^{'}}^{2}}{a_{0}}(n-1)a_{1}b_{1}=\lambda a_{0}.$$ 

Applying formula (2.1) again to $b_{1}$, we obtain

$$-2{{\delta}^{'}}^{2}na_{2}a_{0}^{n-1} -{{\delta}^{'}}^{2}n(n-1)a_{1}^{2}a_{0}^{n-2}=\lambda a_{0},$$

since $b_{1}=na_{1}a_{0}^{n-1}$.

Now, making explicit respect $a_{2}$, we find that

\begin{equation}
a_{2}= -\frac{\lambda}{2{{\delta}^{'}}^{2}na_{0}^{n-2}} -\frac{(n-1)a_{1}^{2}}{2a_{0}}.
\end{equation}

Thus,

$$ a_{2}=\mathcal{Q}_{2,n}(a_{0},a_{1}, \lambda,{\delta}^{'}),$$

with

\begin{equation}
	\mathcal{Q}_{2,n}(X,Y, Z,W)=-\frac{Z}{2W^{2}nX^{n-2}}-\frac{(n-1)Y^{2}}{2X}.
\end{equation}

\quad

\title{INDUCTIVE HYPOTHESIS:   $i> 0$}.

\quad

We can now apply the inductive hypothesis to (2.5) by setting $\mathcal{Q}_{i,n}(a_{0},a_{1}, \lambda,{\delta}^{'})=\mathcal{Q}_{i,n}$ to simplify the notation. Using the Lemma (2.1), we get

$$
	a_{i+2}=-\frac{\lambda \mathcal{Q}_{i,n}}{{{\delta}^{'}}^{2}(i+1)(i+2)na_{0}^{n-1}} + \frac{1}{{{\delta}^{'}}^{2}(i+2)na_{0}^{n}}\Big(\sum_{k= 1}^{i}(k(n+1)-i)\mathcal{Q}_{k,n}b_{i-k}\Big((\mathcal{Q}_{l,n})_{l=0}^{i-k}\Big)\Big)$$
	
	$$\hspace{3.5cm}-\frac{1}{{{\delta}^{'}}^{2}(i+2)na_{0}^{n}}\Big({{\delta}^{'}}^{2}\sum_{k= 1}^{i+1}(k(n+1)-i-2){Q}_{k,n}b_{i+2-k}\Big((\mathcal{Q}_{l,n})_{l=0}^{i+2-k}\Big)\Big).
$$

From this last equation, we can deduce the thesis

$$ 	a_{k+2}=\mathcal{Q}_{k+2,n}(a_{0},a_{1}, \lambda,{\delta}^{'}).$$

\end{proof}

\quad

As a consequence of Proposition (2.1) we obtain a recursive formula to calculate the polynomials $\mathcal{Q}_{i,n}(X,Y,Z,W)=\mathcal{Q}_{i,n}.$

\begin{equation}
\mathcal{Q}_{i+2,n}=-\frac{Z \mathcal{Q}_{i,n}}{{W}^{2}(i+1)(i+2)nX^{n-1}} + \frac{1}{W^{2}(i+2)nX^{n}}\Big(\sum_{k= 1}^{i}(k(n+1)-i)\mathcal{Q}_{k,n}b_{i-k}\Big((\mathcal{Q}_{l,n})_{l=0}^{i-k}\Big)\Big)
\end{equation}

$$ \hspace{3.5cm}-\frac{1}{W^{2}(i+2)nX^{n}}\Big(W^{2}\sum_{k= 1}^{i+1}(k(n+1)-i-2){Q}_{k,n}b_{i+2-k}\Big((\mathcal{Q}_{l,n})_{l=0}^{i+2-k}\Big)\Big)	       . $$

\quad

\section{The demonstration that doesn't exist a polynomial function with respect to the variable z as solution of (1.6) }

In this section, we will prove that there is no exact polynomial function $P(z)$ as a solution to (1.6).

We remark that if the degree of $P(z)$ is equal to $m>0$ i.e. $deg P(z)=m$ then we obtain that

 $$ deg \Bigg(\frac{\partial }{\partial z}\Big(z^{2}-{{\delta}^{'}}^{2}\Big)\frac{\partial P^{n}(z) }{\partial z}\Bigg)=deg\Bigg(\Big(z^{2}-{{\delta}^{'}}^{2}\Big)\frac{\partial P^{n}(z) }{\partial z}\Bigg)-1=2+ deg\Bigg( \frac{\partial P^{n}(z) }{\partial z} \Bigg)-1 =1+deg\Bigg(\frac{\partial P^{n}(z) }{\partial z} \Bigg)$$ 
 $$\hspace{13.75cm}=1+mn-1=mn .$$

So 

\begin{equation}
	deg \Bigg(\frac{\partial }{\partial z}\Big(z^{2}-{{\delta}^{'}}^{2}\Big)\frac{\partial P^{n}(z) }{\partial z}\Bigg)=mn \hspace{0.2cm} if  \hspace{0.2cm} deg P(z)=m .
\end{equation}
 
Since we want to find a non-trivial solution of (1.6), then $m \geq 1$ and we know that $n>1$, so $m < mn$.

Now let's suppose by contradiction that $P(z)$ is a solution to (1.6), i.e.,

$$\frac{\partial }{\partial z}\Big(z^{2}-{{\delta}^{'}}^{2}\Big)\frac{\partial P^{n}(z) }{\partial z}=\lambda P(z).$$

Then the leading coefficient of $P(z)=\sum_{i=0}^{m}a_{i}z^{i}$, which we denote $l_{m}(P):=a_{m}$, is not zero.

\quad

Therefore, the coefficients $a_{i}$ of $P(z)$ satisfy the equation (1.6). In particular, let's compute the leading coefficient of $$R(z):=\frac{\partial }{\partial z}\Big(z^{2}-{{\delta}^{'}}^{2}\Big)\frac{\partial P^{n}(z) }{\partial z}.$$

\quad
First at all, we recall the fact that, given two polynomials $Q(z)$ and $Q'(z)$ with $deg(Q(z))=n', deg(Q'(z))=m'$ then $l_{n'+m'}(QQ')=l_{n'}(Q)l_{m'}(Q')$. This can be argued from the well-known formula for the coefficients of the product of two polynomials.

\begin{equation}
c_{i}=\sum_{k=0}^{i}a_{k}b_{i-k},
\end{equation} 
if $Q(z)=\sum_{i=0}^{n'}a_{i}z^{i}, Q'(z)=\sum_{i=0}^{m'}b_{i}z^{i}$ and $Q(z)Q'(z)=\sum_{i=0}^{n'+m'}c_{i}z^{i}$.

\quad

Using the inductive method from (3.2), we can prove that

\begin{equation}
l_{nm}(P^{n})=(l_{m}(P))^{n}.
\end{equation}

Afterwards, we get that

$$ l_{nm-1}\Big(\frac{\partial P^{n}}{\partial z}\Big)=nm(l_{m}(P))^{n}.$$

\quad 

Now we find leading coefficient with product polynomial $\Big(z^{2}-{{\delta}^{'}}^{2}\Big)\frac{\partial P^{n}(z) }{\partial z}$,

$$ l_{nm+1}\Big(\Big(z^{2}-{{\delta}^{'}}^{2}\Big)\frac{\partial P^{n} }{\partial z}\Big)=l_{2}\Big(z^{2}-{{\delta}^{'}}^{2}\Big)nm(l_{m}(P))^{n}=nm(l_{m}(P))^{n}.$$

\quad

Finally, we can calculate the leading coefficient of $R(z)$.
For the (3.1) we have that

$$l_{nm}(R)=l_{nm}\Big(\frac{\partial }{\partial z}\Big(z^{2}-{{\delta}^{'}}^{2}\Big)\frac{\partial P^{n} }{\partial z}\Big)=nm(nm+1)l_{m}(P)^{n}.$$

The solution $P(z)$ must satisfy (1.6). Since $mn>n$, this implies that

\begin{equation}
l_{nm}(R)=0 \rightarrow nm(nm+1)l_{m}(P)^{n}=0 \rightarrow l_{m}(P)=0.
\end{equation}

From (3.4) we derive a contradiction because, by hypothesis, we have supposed that $l_{m}(P)\neq 0$. Thus, we have proven that there is no exact polynomial solution.

\quad

\subsection{Local existence and uniqueness of an analytic solution of (1.6) }

\quad

In the previous section we saw that there are no polynomial solutions of (1.6), so we'll prove the local existence and uniqueness of an "infinite polynomial", i.e. an analytic solution represented as a power series. In order to do this, we rewrite (1.6) as a Cauchy problem of a first-order system ODE. First, we have to develop (1.6).

\begin{equation}
 2nzu^{n-1}\frac{\partial u}{\partial z}+n(n-1)(z^{2}-{{\delta}^{'}}^{2} )u^{n-2}\Bigg(\frac{\partial u}{\partial z}\Bigg)^{2}+n(z^{2}-{{\delta}^{'}}^{2} )u^{n-1}\frac{\partial^{2} u}{\partial z^{2}}=\lambda u.
\end{equation}

Now, we can make explicit the express (3.5) with respect to  $\frac{\partial^{2} u}{\partial z^{2}}$ and so it becomes

\begin{equation}
\frac{\partial^{2} u}{\partial z^{2}}=\frac{\lambda}{n(z^{2}-{{\delta}^{'}}^{2} )u^{n-2}} -\Bigg(\frac{\partial u}{\partial z}\Bigg)^{2}\frac{n-1}{u}-2\frac{\partial u}{\partial z}\frac{z}{z^{2}-{{\delta}^{'}}^{2}}.
\end{equation}
\quad
From (3.6), we can see that we need to find a solution in a suitable neighbourhood where the second member of (3.6) is well-defined, i.e. where $u\neq 0$ and $z\neq \pm {\delta}^{'}$.

If we set $\frac{\partial u}{\partial z}=u_{1}$, $\frac{\partial \tau}{\partial z}=1$, then (3.6) is transformed into

\begin{equation}	
\frac{\partial u_{1}}{\partial z}=\frac{\lambda}{n({(\tau-c)}^{2}-{{\delta}^{'}}^{2} )u^{n-2}} -{u_{1}}^{2}\frac{n-1}{u}-2u_{1}\frac{\tau-c}{{(\tau-c)}^{2}-{{\delta}^{'}}^{2}},
\end{equation}
where $c$ is a real constant, which will be determined by the choice of the initial conditions.
So we can choose an initial condition for them to simplify the (3.7), that is, if a solution is of the type $\bar{u}(z)=(\tau(z),u(z),u_{1}(z))$, $\bar{u}(0)=\Big(0, u^{(0)},u_{1}^{(0)}\Big)$, where $u^{(0)}\neq 0$ for as we have obtained (3.7).
Then we finally get the following Cauchy problem

\begin{equation}
	\begin{cases}
		\frac{\partial \bar{u} }{\partial z}=F(\tau, u,u_{1} )\\
		\\
		 \bar{u}(0)=\Big(0, u^{(0)},u_{1}^{(0)}\Big)
	\end{cases}	\\		
\end{equation}
$$whit \hspace{0.1cm} F(\tau, u,u_{1})=\Bigg(1,u_{1}, \frac{\lambda}{n({\tau}^{2}-{{\delta}^{'}}^{2} )u^{n-2}} -{u_{1}}^{2}\frac{n-1}{u}-2u_{1}\frac{\tau}{{\tau}^{2}-{{\delta}^{'}}^{2}} \Bigg). $$

\quad

In particular $F: D_{F}\longrightarrow \mathbb{R}^{3}$ where $D_{F}=\mathbb{R}^{3}-\{ (\pm {\delta}^{'}, 0, u_{1}) \mid \forall u_{1} \in \mathbb{R} \}$ is the maximal of $F$. It's immediately that in $D_{F}$ the function $F$ is analytic and that there exists, for a suitable $r>0$, an open ball with radius $r$ and centre $\bar{u}(0) \hspace{0.2cm} B_{r}(\bar{u}(0))\subset D_{F}$ where the Jacobian of $F$, $J_{F}(\tau,u,u_{1})$, is bounded. Thus, $F\mid_{B_{r}(\bar{u}(0))}$ is Lipschitzian with respect to the Euclidean metric on $\mathbb{R}^{3}$(See [9], page 84-85).

\quad

These conditions allow us to apply the theorem of existence and uniqueness the ODE to (3.6)(See [8]  pag 48-50). Thus, there exists an interval $I=[-{\delta}^{(1)},{\delta}^{(1)}]$ for a suitable ${\delta}^{(1)}>0$ such that in $I\times B_{r}\Big(0, u^{(0)},u_{1}^{(0)}\Big)$ there exists, and it's unique a solution of the type $$\bar{u}: (-{\delta}^{(2)},{\delta}^{(2)} ) \longrightarrow B_{r}\Big(0, u^{(0)},u_{1}^{(0)}\Big)$$
$$\hspace{2.8cm}z\longrightarrow \bar{u}(z)=(z,u(z),u_{1}(z)),$$

where $0<{\delta}^{(2)}\leq {\delta}^{(1)}$ so that $\bar{u} \in C^{1}((-{\delta}^{(2)},{\delta}^{(2)}))$. Furthermore, if we call $I_{{\delta}^{(2)}}=(-{\delta}^{(2)},{\delta}^{(2)} )$, unless we restrict $I_{\delta}^{(2)}$, we can assume that $\bar{u}(I_{{\delta}^{(2)}}) \subset B_{r}\Big(0, u^{(0)},u_{1}^{(0)}\Big)$, and this is allowed since $\bar{u}$ is continuous in the initial condition.

\quad

Thus, we are in the conditions to apply the 'Theorem of regularity of ODE' to the Cauchy problem (3.8)( See [5], page 84-85), i.e., unless we restrict $I_{{\delta}^{(2)}}$, $\bar{u}$ is an analytic function and in particular in $I_{{\delta}^{(2)}}$, we have that

\begin{equation}
	 u(z)=\sum_{i=0}^{+\infty}\frac{1}{i!}\frac{\partial^{i} u(0)}{\partial z^{i}}z^{i}.
\end{equation}

We recall that $u(z)$ is a solution of (1.6) and so the terms $\frac{1}{i!}\frac{\partial^{i} u(0)}{\partial z^{i}}$ satisfy the relation (2. 5), i.e., by setting $\partial_{i}u(0):=\frac{1}{i!}\frac{\partial^{i} u(0)}{\partial z^{i}}$ to simplify the following equation,

\begin{equation}
\partial_{i+2}u(0)=-\frac{\lambda \partial_{i}u(0)}{{{\delta}^{'}}^{2}(i+1)(i+2)nu^{n-1}(0)} + \frac{1}{{{\delta}^{'}}^{2}(i+2)nu^{n}(0)}\Bigg(\sum_{k= 1}^{i}(k(n+1)-i)\partial_{k}u(0)b_{i-k}\Bigg(\Big(\partial_{l}u(0)\Big)_{l=0}^{i-k}\Bigg)\Bigg)
\end{equation}

$$\hspace{4.5cm}-\frac{1}{{{\delta}^{'}}^{2}(i+2)nu^{n}(0)}\Bigg({{\delta}^{'}}^{2}\sum_{k= 1}^{i+1}(k(n+1)-i-2)\partial_{k}u(0)b_{i+2-k}\Bigg(\Big(\partial_{l}u(0)\Big)_{l=0}^{i+2-k}\Bigg)\Bigg).$$

\quad
\quad

However, since $u(z)$ is analytic in $I_{{\delta}^{(2)}}$, we have for every $a \in I_{{\delta}^{(2)}}$

\begin{equation}
	u(z)=\sum_{i=0}^{+\infty}\frac{1}{i!}\frac{\partial^{i} u(a)}{\partial z^{i}}(z-a)^{i}.
\end{equation}
 
 Therefore, the coefficients of (3.11) $\frac{1}{i!}\frac{\partial^{i} u(a)}{\partial z^{i}}$ satisfy a similar relation to (3.10) $\forall a \in I_{{\delta}^{(2)}}$ and by setting $\partial_{i}u:=\frac{1}{i!}\frac{\partial^{i} u(a)}{\partial z^{i}}$ we get
 
 \begin{equation}
 	\partial_{i+2}u=-\frac{\lambda \partial_{i}u}{{{\delta}^{'}}^{2}(i+1)(i+2)nu^{n-1}} + \frac{1}{{{\delta}^{'}}^{2}(i+2)nu^{n}}\Bigg(\sum_{k= 1}^{i}(k(n+1)-i)\partial_{k}ub_{i-k}\Bigg(\Big(\partial_{l}u\Big)_{l=0}^{i-k}\Bigg)\Bigg)
 \end{equation}
 $$\hspace{4.5cm}-\frac{1}{{{\delta}^{'}}^{2}(i+2)nu^{n}}\Bigg({{\delta}^{'}}^{2}\sum_{k= 1}^{i+1}(k(n+1)-i-2)\partial_{k}u b_{i+2-k}\Bigg(\Big(\partial_{l}u\Big)_{l=0}^{i+2-k}\Bigg)\Bigg).	         $$

 \subsection{The solution of the equation
 	 (0.3)}

 \quad
 \quad

Now we are ready to find the eigenfunction solution of the nonlinear operator (0.1). We have seen that the eigenfunction is of the type $u(z(\eta))$ with $z(\eta) \in ({{\delta}^{'}}, +\infty)$, since $\eta \in (0, +\infty)$ and $z(\eta)={{\delta}^{'}}\cosh \eta$ in $(0, +\infty)$ is strictly increasing. However, as we saw in section 3 and in sections 0, as well as in 1, $u(z(\eta))$ exists for every $z(\eta) \in ({{\delta}^{'}},{\delta}^{(2)} )$. And from this, as we saw in the
section 1, we can choose ${{\delta}^{'}}$, such that $0<{{\delta}^{'}} <{\delta}^{(2)}$. In particular, since the function $z(\eta)={{\delta}^{'}}\cosh(\eta)$ is strictly increasing in $(0, +\infty)$, we can finally obtain the interval of local existence of the solution $u({{\delta}^{'}}\cosh(\eta))$, i.e. for each $\eta \in \Big(0, \cosh^{-1}\Big(\frac{{\delta}^{(2)}}{{{\delta}^{'}}}\Big)\Big)$.

\quad

 Therefore from (3.11), unless we restrict $\Big(0, \cosh^{-1}\Big(\frac{{\delta}^{(2)}}{{{\delta}^{'}}}\Big)\Big)$, we can rewrite the solution, denoting as ${\mathcal{Q}}_{\lambda,\infty,n}(\eta):=u({{\delta}^{'}}\cosh(\eta))$ in a following way

\begin{equation}
	{\mathcal{Q}}_{\lambda,\infty,n}(\eta)=\sum_{i=0}^{+\infty}\frac{1}{i!}\frac{\partial^{i} u(a)}{\partial ({{\delta}^{'}}\cosh(\eta))^{i}}( {{\delta}^{'}}\cosh(\eta)-a)^{i},
\end{equation}
 
 with $a$ an opportune real value in $({{\delta}^{'}},{\delta}^{(2)} )$.

\quad
\quad

Thus, to sum up, we get the following theorem.

\begin{te}
Given the nonlinear operator (0.1)

$$ \Delta_H u^{n}=\frac{1}{\sinh \eta}\frac{\partial}{\partial \eta}\sinh \eta \frac{\partial  u^{n}}{\partial \eta}  \quad n>1.$$

There exists a ${\delta}^{(2)}>0$ such that $\forall {\delta}^{'} \in (0,{{\delta}^{(2)}} )$ there exists a single eigenfunction of the operator (0.1)$$ {\mathcal{Q}}_{\lambda,\infty,n}(\eta)=\sum_{i=0}^{+\infty}\frac{1}{i! }\frac{\partial^{i} u(a)}{\partial ({\delta}^{‘}\cosh(\eta))^{i}}( {\delta}^{’}\cosh(\eta)-a)^{i},$$ for a real eigenvalue $\lambda \neq 0$, in $\Big(0, \cosh^{-1}\Big(\frac{{\delta}^{(2)}}{{\delta}^{'}}\Big)\Big)$. 
					Where the number ‘$a$’ has been described in (3.11) and the function $u({\delta}^{‘}\cosh(\eta))$ with the variable $z(\eta) ={\delta}^{’}\cosh(\eta)$ is an analytic function in $(-{\delta}^{(2)},{\delta}^{(2)})$ such that
											
											\quad
											\quad
											
				\begin{equation}
				\frac{\partial^{k+2}u(z)}{\partial z^{k+2}}=\mathcal{Q}_{k+2,n}\Big(u(z),\frac{\partial u(z)}{\partial z}, \lambda,{\delta}^{'}\Big) \hspace{0.1cm} k>0,
				\end{equation}
												
			where $\mathcal{Q}_{k+2,n}$ is an algebraic rational function defined in proposition  (2.1) and by a recursive rule in (2.10).
											\end{te}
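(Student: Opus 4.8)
The plan is to assemble the constructions of Sections 1 through 3.1 into a single statement, since the theorem is essentially a synthesis of the intermediate results already established. First I would recall that the change of variables $z(\eta)=\delta'\cosh\eta$ introduced in Section 1 turns the eigenvalue equation (0.3) for $\Delta_H u^{n}$ into the algebraic ODE (1.6) via the operator identity (1.5). Thus producing an eigenfunction of (0.1) is equivalent to producing a solution $u(z)$ of (1.6) on a suitable $z$-interval, and the whole problem is reduced to the analysis carried out in the new coordinate.

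Next I would reproduce the reduction of (1.6) to a first-order system. Expanding the $n$-th power gives the explicit second-order equation (3.5); solving for $\partial^{2}u/\partial z^{2}$ yields (3.6), valid wherever $u\neq 0$ and $z\neq\pm\delta'$. Setting $u_{1}=\partial u/\partial z$ and introducing $\tau$ with $\partial\tau/\partial z=1$ produces the autonomous Cauchy problem (3.8) with analytic right-hand side $F$ on $D_{F}=\mathbb{R}^{3}\setminus\{(\pm\delta',0,u_{1})\}$. Because $F$ is analytic, hence $C^{1}$ and locally Lipschitz on a ball $B_{r}(\bar{u}(0))\subset D_{F}$ where its Jacobian is bounded, the classical existence and uniqueness theorem for ODEs gives a unique local $C^{1}$ solution $\bar{u}(z)=(z,u(z),u_{1}(z))$; the ODE-regularity theorem then upgrades $u$ to an analytic function, so it admits the Taylor expansion (3.11) about a base point $a$.

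I would then read off the recursion (3.14): since $u$ solves (1.6), its Taylor coefficients $\tfrac{1}{i!}\partial^{i}u/\partial z^{i}$ satisfy the recursive identity (2.5), and by Proposition 2.1 together with the recursive rule (2.10) the higher derivatives are expressed as $\mathcal{Q}_{k+2,n}(u,\partial u/\partial z,\lambda,\delta')$. This identifies the analytic solution with the object governed by the rational functions $\mathcal{Q}_{k+2,n}$. Finally I would transport back to the $\eta$-coordinate: since $z(\eta)=\delta'\cosh\eta$ is strictly increasing and maps $(0,+\infty)$ onto $(\delta',+\infty)$, choosing $\delta'$ with $0<\delta'<\delta^{(2)}$ makes the overlap $(\delta',\delta^{(2)})$ nonempty, and the $\eta$-domain of existence becomes $\big(0,\cosh^{-1}(\delta^{(2)}/\delta')\big)$, giving the eigenfunction $\mathcal{Q}_{\lambda,\infty,n}(\eta)=u(\delta'\cosh\eta)$ of the form (3.13).

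The main obstacle I expect is controlling the singular set throughout the argument. The system (3.8) is defined only where $u\neq 0$ and $z\neq\pm\delta'$, so I must take the Cauchy data at an interior base point $a\in(\delta',\delta^{(2)})$ with $u(a)\neq 0$, bounded away from the singular planes $z=\pm\delta'$, and then verify that the local solution stays in $D_{F}$ on a full neighborhood; this forces a careful choice of the Lipschitz radius $r$ and of the interval length, reconciling the parameters $\delta^{(1)}$ and $\delta^{(2)}$. One must also keep $\lambda\neq 0$, which is precisely what prevents degeneration and, in view of Section 3, guarantees the solution is genuinely non-polynomial (non-exact). The delicate point is that the relevant range for the eigenfunction is $z=\delta'\cosh\eta>\delta'$, so the local interval of existence must be arranged to cover a subinterval of $(\delta',\delta^{(2)})$ without ever reaching the singularity at $z=\delta'$.
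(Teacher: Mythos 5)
Your proposal is correct and follows essentially the same route as the paper: the theorem is indeed proved there by assembling the change of variables of Section 1, the Cauchy-problem reduction (3.5)--(3.8) with the classical existence, uniqueness and analyticity theorems, the coefficient recursion from Proposition 2.1 and (2.10), and the final transport back to $\eta$ via the monotonicity of $z(\eta)=\delta'\cosh\eta$ with $0<\delta'<\delta^{(2)}$. Your closing remarks on keeping the trajectory away from the singular set $u=0$, $z=\pm\delta'$ and on reconciling the interval of existence with the range $z>\delta'$ are in fact points the paper treats only lightly, so they strengthen rather than deviate from its argument.
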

\section{Eigenfunction of $ \Delta_H u^{n}$ like a solution of a nonlinear hyperbolic partial differential equations }

\quad
\quad

Now, we return to the PDE (0.2).

$$	\widehat{O}_t  u (\eta,t)= \Delta_H  u^{n}-u = \frac{1}{\sinh \eta}\frac{\partial}{\partial \eta}\sinh \eta \frac{\partial  u^{n}}{\partial \eta}-u.$$

 Using the method of invariant vector spaces and the Theorem (3.1), we finally can get another solution apart from solutions of the type $u(\eta,t)=f(t)\sqrt[n]{c_1\ln(\tanh \frac{\eta}{2})+c_2}$, where $\widehat{O}_t f(t)=-f(t)$, that is $u(\eta,t)=f(t) 	{\mathcal{Q}}_{\lambda,\infty,n}(\eta)$ (see (3.11) and Theorem 3.1) such that 
\begin{equation}
	\widehat{O}_t f(t)=\lambda f^n(t) -f(t).
\end{equation}

In the case $\widehat{O}_t = \frac{\partial^\beta u}{\partial t^\beta}= \frac{1}{\Gamma(1-\beta)}\int_0^t (t-\tau)^{-\beta}\frac{\partial u}{\partial \tau}d\tau$  with $\beta \in (0,1)$, i.e $\widehat{O}_t $ is a Caputo fractional derivative of order
$\beta$(see [4] and [2]), then (4.1) becomes

\begin{equation}
	\frac{\partial^\beta f}{\partial t^\beta}=\lambda f^n(t) -f(t).
\end{equation}

While if $	\widehat{O}_t=\frac{\partial}{\partial t}t \frac{\partial}{\partial t}$ i.e a Laguerre derivative, we get from (4.1) an ODE equation in t several from (4.2) in the following way

\begin{equation}
\frac{\partial}{\partial t}t \frac{\partial}{\partial t}f(t)	=\lambda f^n(t) -f(t).
\end{equation}

And finally, if 	$\widehat{O}_t=\frac{\partial}{\partial t}$, i.e. the well-known partial derivative the (0.2) becomes a usual nonlinear PDE

\begin{equation}
 \frac{\partial u (\eta,t)}{\partial t}= \Delta_H  u^{n}-u = \frac{1}{\sinh \eta}\frac{\partial}{\partial \eta}\sinh \eta \frac{\partial  u^{n}}{\partial \eta}-u,
\end{equation}

and (4.1) becomes the following ODE in $t$

\begin{equation}
 \frac{\partial f(t)}{\partial t}	=\lambda f^n(t) -f(t).
\end{equation}

\quad
\quad

The last nonlinear PDE, which we'll illustrate in this section, involves
periodic solutions with respect to the time variable, i.e. $t$. It is given by the following:

		\begin{equation}
		\frac{\partial u}{\partial t}-\frac{i\omega}{\alpha}u = \frac{1}{\sinh \eta}\frac{\partial}{\partial \eta}\sinh \eta \frac{\partial  u^{n}}{\partial \eta}
	\end{equation}

(for this, see [2] and [7]), where, by separating the variables and applying the Theorem (3.1) we get, again a solution of the type $u(\eta,t)=f(t) 	{\mathcal{Q}}_{\lambda,\infty,n}(\eta)$ and $f(t)$ satisfies the following ODE equation respect to $t$

\begin{equation}
	\frac{\partial f}{\partial t}-\frac{i\omega}{\alpha}f=\lambda f^n(t).
\end{equation} 

\quad

In the both equations (4.4) and (4.6) we can explicitly determine $f(t)$. In fact, (4.5) and (4.7) belong to the following classes of ODE 
\begin{equation}
\frac{\partial f}{\partial t}=A_{1}f^n+A_{2}f, \hspace{0.1cm}with\hspace{0.1cm} n>1\hspace{0.1cm} and\hspace{0.1cm}A_{1} \neq 0,A_{2} \neq0 \in \mathbb{C}. 
\end{equation}

The equation (4.8) has a solution which depends on $A_{1}$, $A_{2}$ in the following way

\begin{prop}
	The ODE $$ \frac{\partial f}{\partial t}=A_{1}f^n+A_{2}f, \hspace{0.1cm}with\hspace{0.1cm} n>1\hspace{0.1cm} and\hspace{0.1cm}A_{1} \neq 0,A_{2} \neq0 \in \mathbb{C}.   $$ has got the solution of this type
	
	\begin{equation}
		f(t)=\sqrt[n-1]{\frac{-A_{2}}{A_{1}-ce^{-(n-1)A_{2}t}}},
	\end{equation}
where $c$ is a real constant, determined with an appropriate choice of initial conditions.

\end{prop}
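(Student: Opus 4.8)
The plan is to recognize the ODE as a Bernoulli equation and linearize it by the standard substitution. Since we seek a nontrivial solution we may assume $f \not\equiv 0$, so dividing through by $f^n$ turns $\partial_t f = A_1 f^n + A_2 f$ into $f^{-n}\partial_t f = A_1 + A_2 f^{1-n}$. The natural move is then to set $v = f^{1-n}$, for which $\partial_t v = (1-n)f^{-n}\partial_t f$, and the equation collapses to the linear first-order ODE
$$\partial_t v + (n-1)A_2\, v = (1-n)A_1.$$

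First I would solve this linear equation by the integrating-factor method, using $\mu(t) = e^{(n-1)A_2 t}$. Multiplying through and integrating gives $\mu v = -\tfrac{A_1}{A_2}\mu + \tilde c$ for an arbitrary constant $\tilde c$, hence $v = -\tfrac{A_1}{A_2} + \tilde c\, e^{-(n-1)A_2 t}$. Back-substituting $v = f^{1-n} = 1/f^{n-1}$ and clearing denominators yields
$$f^{n-1} = \frac{A_2}{-A_1 + A_2\tilde c\, e^{-(n-1)A_2 t}} = \frac{-A_2}{A_1 - c\, e^{-(n-1)A_2 t}},$$
where $c := A_2 \tilde c$ absorbs the multiplicative constant; taking the $(n-1)$-th root gives exactly the stated formula, with $c$ fixed by the prescribed initial condition.

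Alternatively, and as an independent check, I would substitute the claimed $f(t)$ directly into the ODE: differentiating $f^{n-1} = -A_2/(A_1 - c\, e^{-(n-1)A_2 t})$ and comparing $\partial_t f$ with $A_1 f^n + A_2 f$ verifies equality term by term. This verification route sidesteps the substitution entirely and is the safer way to present the result if a purely computational argument is preferred.

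There is no genuine conceptual obstacle here; the only delicate points are bookkeeping. Dividing by $f^n$ discards the trivial solution $f\equiv 0$, which is acceptable since we want a nontrivial eigenfunction-type factor. More importantly, because $A_1, A_2 \in \mathbb{C}$ the $(n-1)$-th root is multivalued, so the displayed formula really represents a family of branches; the statement should be read as ``$f$ is a branch of'' that expression, with the constant (described as real in the statement, though in general complex) and the branch both pinned down by specifying $f$ at one instant. I expect the main care to go into tracking where the arbitrary constant lands and into the handling of the root, rather than into any substantive difficulty.
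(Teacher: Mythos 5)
Your proposal is correct, but your primary route differs from the paper's. The paper proves the proposition purely by verification: it differentiates the claimed expression $f(t)=\sqrt[n-1]{-A_{2}/(A_{1}-ce^{-(n-1)A_{2}t})}$, separately evaluates $A_{1}f^{n}+A_{2}f$, and checks that the two agree --- exactly the ``independent check'' you describe as your fallback. Your main argument instead recognizes the equation as a Bernoulli equation, substitutes $v=f^{1-n}$ to obtain the linear ODE $\partial_t v+(n-1)A_{2}v=(1-n)A_{1}$, and solves it by an integrating factor; the algebra is right and lands on the stated formula with $c=A_{2}\tilde c$. Your derivation buys more than the paper's: it shows the displayed family is the \emph{general} nontrivial solution rather than merely exhibiting one solution, and it explains where the constant comes from. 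The paper's verification is shorter and sidesteps the points you rightly flag as delicate --- the division by $f^{n}$ (discarding $f\equiv 0$) and the multivaluedness of the $(n-1)$-th root over $\mathbb{C}$, which a pure substitution check never has to confront. Your observation that $c$ is in general complex (not real, as the statement asserts) when $A_{2}\in\mathbb{C}$ is a fair correction that the paper does not address.
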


\begin{proof}
If we calculate $\frac{\partial }{\partial t}	\Big(\sqrt[n-1]{\frac{-A_{2}}{A_{1}-ce^{-(n-1)A_{2}t}}}\Big)$, we get

$$ \frac{1}{n-1}\sqrt[n-1]{\Bigg(\frac{A_{1}-ce^{-(n-1)A_{2}t}}{-A_{2}}\Bigg)^{n-2} }\cdot  \frac{(-A_{2})^{2}(n-1)ce^{-(n-1)A_{2}t}}{(A_{1}-ce^{-(n-1)A_{2}t})^{2}} = $$
$$= \frac{-A_{2}ce^{-(n-1)A_{2}t}}{A_{1}-ce^{-(n-1)A_{2}t}}\cdot \sqrt[n-1]{\frac{-A_{2}}{A_{1}-ce^{-(n-1)A_{2}t}}}  . $$

Substituting the function (4.9) into the second member of the (4.8), we can derive

$$ A_{1}\Bigg(\sqrt[n-1]{\frac{-A_{2}}{A_{1}-ce^{-(n-1)A_{2}t}}}\Bigg)^{n}+A_{2}\sqrt[n-1]{\frac{-A_{2}}{A_{1}-ce^{-(n-1)A_{2}t}}}=$$

$$\hspace{1.5cm}= \frac{-A_{1}A_{2}}{A_{1}-ce^{-(n-1)A_{2}t}}\cdot \sqrt[n-1]{\frac{-A_{2}}{A_{1}-ce^{-(n-1)A_{2}t}}}+A_{2}\sqrt[n-1]{\frac{-A_{2}}{A_{1}-ce^{-(n-1)A_{2}t}}}$$

$$\hspace{-1.5cm}=A_{2} \sqrt[n-1]{\frac{-A_{2}}{A_{1}-ce^{-(n-1)A_{2}t}}} \Bigg( 1-\frac{A_{1}}{A_{1}-ce^{-(n-1)A_{2}t}}  \Bigg)$$

$$\hspace{-2.6cm}=\frac{-A_{2}ce^{-(n-1)A_{2}t}}{A_{1}-ce^{-(n-1)A_{2}t}}\cdot \sqrt[n-1]{\frac{-A_{2}}{A_{1}-ce^{-(n-1)A_{2}t}}}  , $$

is the same as the first member of (4.8), so (4.9) is a solution of (4.8).

\end{proof}

And now we can obtain a solutions to equations (4.4) and (4.6) by eigenfunctions of the nonlinear hyperbolic operator (0.1) in the following corollary.

\begin{coro}

Given a nonlinear hyperbolic PDEs (4.4) and (4.6)

$$   \frac{\partial u (\eta,t)}{\partial t}= \Delta_H  u^{n}-u = \frac{1}{\sinh \eta}\frac{\partial}{\partial \eta}\sinh \eta \frac{\partial  u^{n}}{\partial \eta}-u,     $$

$$	\frac{\partial u}{\partial t}-\frac{i\omega}{\alpha}u = \frac{1}{\sinh \eta}\frac{\partial}{\partial \eta}\sinh \eta \frac{\partial  u^{n}}{\partial \eta} .$$

For any real number $\lambda \neq 0$, given suitable initial conditions for (4.4) and (4.6), there exist locally the following solutions

\begin{equation}
u(\eta,t)=\sqrt[n-1]{\frac{1}{\lambda-ce^{(n-1)t}}}\cdot \sum_{i=0}^{+\infty}\frac{1}{i! }\frac{\partial^{i} u_{\lambda}(a)}{\partial ({\delta}^{'}\cosh(\eta))^{i}}( {\delta}^{'}\cosh(\eta)-a)^{i},    
\end{equation}

\begin{equation}
	u(\eta,t)=\sqrt[n-1]{\frac{-\frac{i\omega}{\alpha}}{\lambda-ce^{-(n-1)\frac{i\omega}{\alpha}t}}}\cdot \sum_{i=0}^{+\infty}\frac{1}{i! }\frac{\partial^{i} u_{\lambda}(a)}{\partial ({\delta}^{'}\cosh(\eta))^{i}}( {\delta}^{'}\cosh(\eta)-a)^{i},
\end{equation}
 where $c \neq 0$ is a suitable real number for the temporal part of $u(\eta,t)$ and ${\delta}^{'}>0$, and $a>0$ are appropriate real numbers such that the function $u_{\lambda}({\delta}^{'}\cosh(\eta))$ exists and is analytic near the initial conditions of (4.4) and (4.6) and enjoys the property (3.14).

\end{coro}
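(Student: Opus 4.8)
The plan is to prove the Corollary by \emph{separation of variables}, reducing each PDE to the scalar ODE of class (4.8) that was already solved in the preceding Proposition, while the spatial factor is supplied by the eigenfunction constructed in Theorem 3.1. Concretely, for both (4.4) and (4.6) I would posit the product ansatz
\begin{equation*}
u(\eta,t)=f(t)\,{\mathcal{Q}}_{\lambda,\infty,n}(\eta),
\end{equation*}
and set $g(\eta):={\mathcal{Q}}_{\lambda,\infty,n}(\eta)$. By Theorem 3.1, for any $\lambda\neq 0$ and any ${\delta}^{'}\in(0,{\delta}^{(2)})$ the function $g$ exists, is analytic, and is an eigenfunction of the operator (0.1), i.e.\ $\Delta_H g^{n}=\lambda g$ on the interval $\Big(0,\cosh^{-1}\big(\tfrac{{\delta}^{(2)}}{{\delta}^{'}}\big)\Big)$; this furnishes the spatial part and the local interval of existence in $\eta$ claimed in the statement.

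Next I would substitute the ansatz into each equation. Since $\Delta_H$ differentiates only in $\eta$ while $f(t)$ is constant in $\eta$, and since $u^{n}=f^{n}(t)\,g^{n}(\eta)$, one has $\Delta_H u^{n}=f^{n}(t)\,\Delta_H g^{n}=\lambda f^{n}(t)\,g(\eta)$, whereas $\partial_t u=f'(t)\,g(\eta)$. For (4.4) the equation $\partial_t u=\Delta_H u^{n}-u$ therefore collapses, after dividing by $g(\eta)\neq 0$, to
\begin{equation*}
f'(t)=\lambda f^{n}(t)-f(t),
\end{equation*}
which is (4.5), an instance of (4.8) with $A_{1}=\lambda$ and $A_{2}=-1$. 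For (4.6) the same substitution yields $f'(t)-\tfrac{i\omega}{\alpha}f(t)=\lambda f^{n}(t)$, i.e.\ (4.7), an instance of (4.8) with $A_{1}=\lambda$ and $A_{2}=\tfrac{i\omega}{\alpha}$.

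It then remains only to read off $f(t)$ from the preceding Proposition, whose explicit formula (4.9) gives $f(t)=\sqrt[n-1]{-A_{2}/(A_{1}-ce^{-(n-1)A_{2}t})}$. Substituting $(A_{1},A_{2})=(\lambda,-1)$ reproduces the temporal factor $\sqrt[n-1]{1/(\lambda-ce^{(n-1)t})}$ of (4.10), and substituting $(A_{1},A_{2})=(\lambda,\tfrac{i\omega}{\alpha})$ reproduces the factor $\sqrt[n-1]{(-\tfrac{i\omega}{\alpha})/(\lambda-ce^{-(n-1)\frac{i\omega}{\alpha}t})}$ of (4.11). Multiplying by $g(\eta)$ yields exactly (4.10) and (4.11), and the recursive property (3.14) of $g$ is inherited verbatim from Theorem 3.1.

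The routine verifications are the factorization $u^{n}=f^{n}g^{n}$ and the commutation of $\Delta_H$ with multiplication by $f(t)$; the point that needs genuine care is local well-posedness of the product solution. I would choose the initial data of (4.4) and (4.6) to factor compatibly with the separated form, fixing $f(0)$ through the constant $c\neq 0$, and then restrict the time interval so that the denominator $\lambda-ce^{\pm(n-1)A_{2}t}$ stays bounded away from $0$; this keeps the radicand nonzero and, in the complex case (4.6), pins down a single analytic branch of the $(n-1)$-th root. Simultaneously $\eta$ must be confined to the subinterval of $\Big(0,\cosh^{-1}\big(\tfrac{{\delta}^{(2)}}{{\delta}^{'}}\big)\Big)$ on which $g=\mathcal{Q}_{\lambda,\infty,n}$ is analytic and nonzero, which is precisely where the division by $g$ above is legitimate. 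I expect this simultaneous localization in $(\eta,t)$, rather than any single computation, to be the only real obstacle.
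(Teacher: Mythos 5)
Your proposal is correct and follows essentially the same route as the paper: separation of variables with the eigenfunction of Theorem 3.1 as spatial factor, reduction to the ODE class (4.8), and application of Proposition 4.1 with $(A_{1},A_{2})=(\lambda,-1)$ for (4.10) and $(A_{1},A_{2})=(\lambda,\tfrac{i\omega}{\alpha})$ for (4.11). The paper states this in one line; your added discussion of the simultaneous localization in $(\eta,t)$ and the choice of branch only makes explicit what the paper leaves implicit.
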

\begin{proof}
	
It's enough to apply Theorem 3.1 and Proposition 4.1, in particular for (4.10) $A_{1}=\lambda$ and $A_{2}=-1$, for (4.11) $A_{1}=\lambda$ and $A_{2}=\frac{i\omega}{\alpha}$.

\end{proof}

\end{document}